\documentclass[reqno]{amsart}
\usepackage{amssymb,amsmath}%
\usepackage{ifpdf}
\ifpdf
\else
\expandafter \ifx \csname dvipdfm \endcsname \relax
\usepackage[hypertex,hyperindex]{hyperref}
\else
\usepackage[dvipdfm,hyperindex]{hyperref}
\fi
\fi

\begin{document}
\title[ Coincidence point results]
{Coincidence point results involving a generalized class of simulation functions}

\author[D.K. Patel, P.R. Patle, L. Budhia, D. Gopal]
{D.K. Patel, P.R. Patle, L. Budhia, D. Gopal}  

\address{D.K. Patel \newline
 Department of Mathematics, Visvesvaraya National Institute of Technology, Nagpur-440010, India}
\email{deepesh456@gmail.com}

\address{P.R. Patle \newline
 Department of Mathematics, Visvesvaraya National Institute of Technology, Nagpur-440010, India}
\email{pradip.patle12@gmail.com}

\address{L. Budhia \newline
Department of Applied Mathematics \& Humanities, S.V. National Institute of Technology, Surat-395007, India}
\email{lokesh86budhia@gmail.com}

\address{D. Gopal \newline
Department of Applied Mathematics \& Humanities, S.V. National Institute of Technology, Surat-395007, India}
\email{gopaldhananjay@yahoo.in}

\subjclass[2000]{47H10, 54H25}
\keywords{Coincidence Point; $\mathcal{Z}$-contraction; Suzuki type $(Z_G,g)$-contractions}

\begin{abstract}
The purpose of this work is to introduce a general class of $C_G$-simulation functions and obtained some new coincidence and common fixed points results in metric spaces. Some useful examples are presented to illustrate our theorems. Results obtained in this paper extend, generalize and unify some well known fixed and common fixed point results. 
\end{abstract}

\maketitle
\numberwithin{equation}{section}
\newtheorem{theorem}{Theorem}[section]
\newtheorem{definition}{Definition}[section]
\newtheorem{lemma}[theorem]{Lemma}
\newtheorem{proposition}[theorem]{Proposition}
\newtheorem{corollary}[theorem]{Corollary}
\newtheorem{remark}{Remark}[section]
\newtheorem{example}{Example}[section]

\section{Introduction}
Because of its large number of applications in various branches  of mathematical sciences, fixed point theory enticed many mathematicians. The most relevant result of the theory is Banach contraction  principle (BCP).  Several authors have studied on various generalizations of Banach contraction mapping principle. For important and famous examples of such generalizations, we may cite Edelstein \cite{Edelstein}, Kannan \cite{Kannan}, Reich \cite{Reich71, Reich71a,Reich72}, Ciric \cite{Ciric} and Suzuki \cite{Suzuki2008}. Recently, BCP has been generalized in a new way by Khojasteh et al. in \cite{KhojastehShuklaRadenovic} defining $\mathcal{Z}$-contraction with the help of a class of control functions called simulation functions. In \cite{OlgunBicer2016}, Olgun et al. obtained the fixed point results for generalized $\mathcal{Z}$-contractions. Further, L-de-Hierro et al. \cite{HierroKarapinar} enlarged this class of simulation functions and extended some coincidence and common fixed point theorems.

All the above results, require the contraction condition or the contraction type condition to hold for every possible pair of points in domain. In order to relax this requirement in accordance with the outcome of the result Suzuki
\cite{Suzuki2008} contributed significantly. The results of Suzuki has inspired many researcher working in the area of metric fixed point theory (see \cite{PantSinghMishra2016},\cite{Suzuki2009}  and references therein).
Recently, Kumam et al. in \cite{KumamGopalBudhia} defined Suzuki type $\mathcal{Z}$-contraction and unified some known fixed point results. In \cite{RadenovicChandok}, Radenovic and Chandok coined the notion of $(Z_G,g)$-contraction and generalized the results of Jungck \cite{Jungck1976}, L-de-Hierro et al. \cite{HierroKarapinar} and Olgun et al. \cite{OlgunBicer2016}. Motivated by the results of \cite{KumamGopalBudhia} and \cite{RadenovicChandok}, in this work we introduce the intuition of Suzuki type $(Z_G,g)$-contractions and obtain some coincidence point theorems. 
An application to fractional order functional differential equation is presented to illustrate the usability of the obtained result.

\section{Preliminaries}
\label{sec:1}
We now revisit some concepts and results in the literature.
\begin{definition}\rm\cite{Ansari2014,RadenovicChandok} \label{Cclassfunction}
A mapping $G : [0,\infty)^2 \to \mathbb{R}$ is called $C$-class function if it is
continuous and satisfies the following conditions:
\begin{enumerate}
		\item[(i)]  $G(s, t) \leq s$,
		\item[(ii)] $G(s, t) = s$ implies that either $s = 0$ or $t = 0$, for all $s, t \in [0,\infty)$.
\end{enumerate}
\end{definition}

\begin{definition}\rm\cite{RadenovicChandok}\label{CGproperty}
	A mapping $G : [0,\infty)^2 \to \mathbb{R}$ has a property $C_G$, if there exists an
	$C_G \ge 0$ such that
	\begin{enumerate}
		\item[(i)]  $G(s, t) > C_G$ implies $s > t$,
		\item[(ii)] $G(t, t) \le C_G$ for all $t \in [0,\infty)$.
	\end{enumerate}
\end{definition}
\begin{definition}\rm\cite{ArgoubiSametVetro2015}\label{CGSimulationfunction}
	A simulation function is a mapping $\zeta : [0,\infty)^2 \to \mathbb{R}$ satisfying the following:
	\begin{enumerate}
		\item[(i)] $\zeta (t, s) < s-t$ for all $t, s > 0$,
		\item[(ii)] if $\{t_n\}$ and $\{s_n\}$ are sequences in $(0,\infty)$ such that $\lim\limits_{n\to\infty} t_n = \lim\limits_{n\to\infty} s_n > 0$ and
		$t_n < s_n$, then $\lim\limits_{n\to\infty}\sup \zeta(t_n, s_n) < 0.$
	\end{enumerate}
\end{definition}                                                                                               
For examples and related results on simulation functions, one may refer to  \cite{ChenTang2016, HierroKarapinar, HieroSamet, Karapinar2016, KhojastehShuklaRadenovic, KarapinarKhojasteh, KomalKumamGopal2016, NastasiVetro2015, RadenovicVetroVujakovic2017, Samet2015, TchierVetro2016}.
\label{AAAAAAA}
\begin{definition}\rm\cite{AnsariIsikRadenovic,RadenovicChandok}\label{CGSimulationfunction}
	A $C_G$-simulation function is a mapping $\zeta : [0,\infty)^2 \to \mathbb{R}$ satisfying the following:
	\begin{enumerate}
		\item[(a)] $\zeta (t, s) < G(s, t)$ for all $t, s > 0$, where $G : [0,\infty)^2 \to \mathbb{R}$  is a $C$-class function;
		\item[(b)] if $\{t_n\}$ and $\{s_n\}$ are sequences in $(0,\infty)$ such that $\lim\limits_{n\to\infty} t_n = \lim\limits_{n\to\infty} s_n > 0$ and
		$t_n < s_n$, then $\lim\limits_{n\to\infty}\sup \zeta(t_n, s_n) < C_G.$
	\end{enumerate}
\end{definition}                                                                                               
Let $Z_G$ be the family of all $C_G$-simulation functions $\zeta : [0,\infty)^2 \to \mathbb{R}.$
\begin{definition}\rm \cite{RadenovicChandok}\label{ZG-contraction}
	Let $(X, d)$ be a metric space and $f, g : X \to X$ be self-mappings.
	The mapping $f$ is called a $(Z_G, g)$-contraction if there exists $\zeta\in Z_G$ such that
	\begin{equation} \label{RadnovicContrA}
	\zeta(d(fx,fy), d(gx,gy))\geq C_G
	\end{equation}
	for all $x, y \in X$ with $gx \ne gy.$
\end{definition}
If $g = i_X$ (identity mapping on $X$) and $C_G = 0$, we get $\mathcal{Z}$-contraction of \cite{KhojastehShuklaRadenovic}.

\begin{definition}\rm \cite{RadenovicChandok}\label{GeneralizedZG-contraction}
	Let $(X, d)$ be a metric space and $f, g : X \to X$ be self-mappings.
	The mapping $f$ is called a generalized $(Z_G, g)$-contraction if there exists $\zeta\in Z_G$ such that
	\begin{equation} \label{GeneralizedZG-contraction}
	\zeta\left(d(fx,fy), \max\left\lbrace d(gx,gy), d(gx,fx), d(gy,fy), \frac{d(gx,fy)+d(gy,fx)}{2} \right\rbrace\right)\geq C_G
	\end{equation}
	for all $x, y \in X$ with $gx \ne gy.$
\end{definition}
If $g = i_X$ (identity mapping on $X$) and $C_G = 0$, we get $\mathcal{Z}$-contraction of \cite{OlgunBicer2016}.

\begin{definition}\rm \cite{Jungck1996}\label{coincidencepoint}
	Let $f$ and $g$ be self-mappings of a set $X$. If $u = fv = gv$ for some $v \in X$, then
	$v$ is called a coincidence point of $f$ and $g$ and $u$ is called a point of coincidence of $f$ and
	$g$. The pair $(f, g)$ is weakly compatible if the mappings commute at their coincidence
	points.
\end{definition}

\begin{proposition}\rm \cite{AbbasJungck2008} \label{PropA}
	Let $f$ and $g$ be weakly compatible self-mappings of a set $X$. If $f$ and $g$ have a unique point of coincidence $u=fx=gx$, then $u$ is the unique common fixed point of $f$ and $g$.
\end{proposition}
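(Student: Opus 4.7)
The plan is to exploit weak compatibility to promote the point of coincidence $u$ itself to a coincidence point, and then invoke the uniqueness hypothesis twice: once to force $fu=gu=u$, and once more for the uniqueness of the common fixed point.

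First I would fix $x\in X$ with $u=fx=gx$. Since $(f,g)$ is weakly compatible by Definition \ref{coincidencepoint}, the mappings commute at $x$, so
\[
fu = f(gx) = g(fx) = gu.
\]
This identity has two consequences: the common value $fu=gu$ is itself a point of coincidence of $f$ and $g$, and $u$ is a coincidence point producing it. By the hypothesis that the point of coincidence is unique, this new point of coincidence $fu$ must equal the old one $u$; hence $fu = gu = u$, which is exactly the statement that $u$ is a common fixed point.

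For uniqueness of the common fixed point, suppose $w$ is any common fixed point of $f$ and $g$, so $fw=gw=w$. Then $w$ is trivially a point of coincidence of $f$ and $g$. The uniqueness hypothesis again forces $w=u$, completing the argument.

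The proof is essentially a two-line manipulation; there is no real obstacle, only the careful bookkeeping of distinguishing a coincidence point from a point of coincidence and applying the uniqueness hypothesis at the level of the latter. No properties of the underlying set beyond the bare definitions are needed, and in particular no metric structure or contraction machinery is invoked.
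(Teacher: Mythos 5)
Your argument is correct and is precisely the standard proof of this result (the paper itself gives no proof, citing Abbas--Jungck instead): weak compatibility yields $fu=f(gx)=g(fx)=gu$, so $fu=gu$ is a point of coincidence, and uniqueness of the point of coincidence forces $fu=gu=u$; uniqueness of the common fixed point then follows since any common fixed point is itself a point of coincidence. No gaps; the distinction you draw between a coincidence point and a point of coincidence is exactly the bookkeeping the argument requires.
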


\begin{lemma} \rm \cite{RadenovicChandok} \label{LemmaA}
	Let $(X, d)$ be a metric space and $\{x_n\}$ be a sequence in $X$
	such that $\lim\limits_{n\to \infty}d(x_n,x_{n+1})=0.$ If $\{x_n\}$ is not Cauchy then there exists $\varepsilon>0$ and two subsequences $\{x_{m(k)}\}$ and $\{x_{n(k)}\}$ of $\{x_n\}$ where $n(k)>m(k)>k$ such that
	\begin{eqnarray*}
		&&\lim\limits_{k\to\infty} d(x_{m(k)},x_{n(k)})=\lim\limits_{k\to\infty} d(x_{m(k)},x_{n(k)+1})=\lim\limits_{k\to\infty} d(x_{m(k)-1},x_{n(k)})=\varepsilon\\
		&& ~~~~\text{and}~~~~\lim\limits_{k\to\infty} d(x_{m(k)-1},x_{n(k)+1})=\lim\limits_{k\to\infty} d(x_{m(k)+1},x_{n(k)+1})=\varepsilon. 
	\end{eqnarray*}
\end{lemma}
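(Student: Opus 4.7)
The plan is to exploit the negation of the Cauchy condition together with the hypothesis $d(x_n,x_{n+1})\to 0$ in the standard way. Because $\{x_n\}$ is not Cauchy, there exists $\varepsilon>0$ such that for arbitrarily large indices one can find a later index at distance at least $\varepsilon$. I would first fix such an $\varepsilon$; then, proceeding inductively in $k$, I would choose $m(k)>k$ from the cofinal set of indices admitting some later partner at distance $\geq\varepsilon$, and define $n(k)$ to be the \emph{least} integer exceeding $m(k)$ with $d(x_{m(k)},x_{n(k)})\geq\varepsilon$. The minimality of $n(k)$ is the crucial structural choice, since it forces $d(x_{m(k)},x_{n(k)-1})<\varepsilon$.

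Next I would combine this with the triangle inequality to obtain the two-sided squeeze
\[
\varepsilon \le d(x_{m(k)},x_{n(k)}) \le d(x_{m(k)},x_{n(k)-1}) + d(x_{n(k)-1},x_{n(k)}) < \varepsilon + d(x_{n(k)-1},x_{n(k)}).
\]
Since $d(x_{n(k)-1},x_{n(k)})\to 0$ by hypothesis, letting $k\to\infty$ yields $d(x_{m(k)},x_{n(k)})\to \varepsilon$, which establishes the anchor limit.

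With this anchor in hand, the remaining four limits follow from the quadrilateral inequality $|d(x_a,x_b)-d(x_c,x_d)|\le d(x_a,x_c)+d(x_b,x_d)$, applied with $(c,d)=(m(k),n(k))$ and each of the four perturbed pairs $(a,b)\in\{(m(k),n(k)+1),\,(m(k)-1,n(k)),\,(m(k)-1,n(k)+1),\,(m(k)+1,n(k)+1)\}$. In every case the resulting error is a sum of one or two consecutive-term distances of the form $d(x_j,x_{j\pm 1})$, all of which tend to $0$ by hypothesis. Hence each of the four expressions shares the same limit $\varepsilon$ as $d(x_{m(k)},x_{n(k)})$.

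I do not foresee a real obstacle; the one point that deserves care is the inductive construction of $(m(k),n(k))$ so that $n(k)>m(k)>k$ at every stage. This is resolved by observing that the set of indices $m$ admitting some partner at distance $\geq\varepsilon$ is cofinal in $\mathbb{N}$ — otherwise the diameters $\sup_{p,q\ge N}d(x_p,x_q)$ would eventually be less than $\varepsilon$, contradicting the failure of the Cauchy property — so at each step one can pick $m(k)$ larger than both $k$ and the previously chosen indices.
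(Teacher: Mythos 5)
Your argument is correct and is the standard proof of this lemma: negate the Cauchy condition to get $\varepsilon$ and a cofinal supply of pairs, take $n(k)$ minimal past $m(k)$ with $d(x_{m(k)},x_{n(k)})\ge\varepsilon$ so that minimality plus $d(x_{n(k)-1},x_{n(k)})\to 0$ squeezes the anchor limit to $\varepsilon$, and then transfer this limit to the four shifted pairs via $|d(x_a,x_b)-d(x_c,x_d)|\le d(x_a,x_c)+d(x_b,x_d)$. The paper itself states this lemma only as a citation to Radenovic and Chandok and gives no proof, but your construction is exactly the one used for results of this type, so there is nothing to flag.
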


\section{Main results}
\label{sec:2}

In this section, we introduce the new concept of Suzuki type contractions involving $C_G$-simulation functions which is used to prove several coincidence point theorems in complete metric spaces. 

\begin{definition}\rm
	Let $(X, d)$ be a metric space and $f, g : X \to X$ be self-mappings.
	A mapping $f$ is called a Suzuki type $(Z_G, g)$-contraction if there exists $\zeta\in Z_G$ such that
	\begin{equation}\label{SuzukiContraction}
	\frac{1}{2}d(gx,fx)<d(gx,gy) \implies \zeta(d (fx, fy), d (gx, gy))\ge C_G
	\end{equation}
	for all $x, y \in X$ with $gx \ne gy.$
\end{definition}
If $g = i_X$ (identity mapping on $X$) and $C_G = 0$, we get Suzuki type $\mathcal{Z}$-contraction of \cite{KumamGopalBudhia}.

\begin{definition}\rm \label{SuzukiGeneralizedZG-contraction}
	Let $(X, d)$ be a metric space and $f, g : X \to X$ be self-mappings.
	The mapping $f$ is called a Suzuki type generalized $(Z_G, g)$-contraction if there exists $\zeta\in Z_G$ such that
	\begin{equation}\label{SuzukiGeneralizedZG-contraction}
	\frac{1}{2}d(gx,fx)<d(gx,gy) ~~~\Longrightarrow ~~~\zeta\left(d(fx,fy), M(x,y)\right)\geq C_G
	\end{equation}
	for all $x, y \in X$ with $gx \ne gy$, where
	$$M(x,y)=\max\left\lbrace d(gx,gy), d(gx,fx), d(gy,fy), \frac{d(gx,fy)+d(gy,fx)}{2} \right\rbrace.$$
\end{definition}
If $g = i_X$ (identity mapping on $X$) and $C_G = 0$, we get Suzuki type generalized $\mathcal{Z}$-contraction.

\begin{theorem}\rm \label{TheoremC1}
	Let $(X, d)$ be a metric space, $f, g : X \to X$ be self-mappings and $f$ be a Suzuki type generalized $(Z_G, g)$-contraction. 
	Assume that the following conditions hold:
	\begin{enumerate}
		\item[(i)] $f(X)\subseteq g(X)$,
		\item[(ii)] $g(X)$ or $f(X)$ is complete.
	\end{enumerate}
	Then $f$ and $g$ have unique point of coincidence.
\end{theorem}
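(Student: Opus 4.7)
My plan is to build a Jungck-type Picard sequence and adapt the classical Suzuki-Khojasteh argument to the two-map setting. Fix $x_0\in X$; since $f(X)\subseteq g(X)$, pick $x_{n+1}$ with $gx_{n+1}=fx_n$ and write $y_n:=gx_n$. If $y_n=y_{n+1}$ ever occurs then $x_n$ is a coincidence point and we are done, so assume $y_n\neq y_{n+1}$ throughout. Because $\tfrac12 d(gx_n,fx_n)<d(gx_n,gx_{n+1})$, the Suzuki premise holds, giving $\zeta(d(fx_n,fx_{n+1}),M(x_n,x_{n+1}))\geq C_G$. Clause (a) of the $C_G$-simulation function combined with item (i) of the $C_G$ property forces $d(fx_n,fx_{n+1})<M(x_n,x_{n+1})$. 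A short triangle-inequality calculation collapses $M(x_n,x_{n+1})$ to $\max\{d(y_n,y_{n+1}),d(y_{n+1},y_{n+2})\}$, so $\{d(y_n,y_{n+1})\}$ is strictly decreasing with some limit $\alpha\geq0$; clause (b) applied with $t_n=d(y_{n+1},y_{n+2})$ and $s_n=d(y_n,y_{n+1})$ rules out $\alpha>0$.

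Next I prove $\{y_n\}$ is Cauchy by contradiction. Apply Lemma~\ref{LemmaA} to obtain subsequences $\{y_{m(k)}\},\{y_{n(k)}\}$ with the stated limits, and note $\tfrac12 d(y_{m(k)},y_{m(k)+1})\to0<\varepsilon$, so the Suzuki premise holds eventually for the pair $(x_{m(k)},x_{n(k)})$. Then $\zeta(d(y_{m(k)+1},y_{n(k)+1}),M(x_{m(k)},x_{n(k)}))\geq C_G$, with both arguments tending to $\varepsilon$ and the first strictly below the second (forced, as before, by (a) and $C_G$-property (i)); clause (b) supplies the contradiction. Completeness of $g(X)$ (or of $f(X)$, in which case $\{y_n\}\subset f(X)\subseteq g(X)$ still has its limit in $g(X)$) then yields $u\in g(X)$ with $y_n\to u$ and $v\in X$ with $gv=u$.

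The main obstacle is showing $fv=u$, because the Suzuki premise may fail at the pair $(x_n,v)$. I handle this with the standard two-index Suzuki trick: if both $\tfrac12 d(gx_n,fx_n)\geq d(gx_n,gv)$ and $\tfrac12 d(gx_{n+1},fx_{n+1})\geq d(gx_{n+1},gv)$ held, then
\[
d(y_n,y_{n+1})\leq d(gx_n,gv)+d(gv,gx_{n+1})\leq \tfrac12\bigl(d(y_n,y_{n+1})+d(y_{n+1},y_{n+2})\bigr)<d(y_n,y_{n+1}),
\]
using the strict decrease from the first step, a contradiction. Hence one of the two premises holds for infinitely many $n$, and the contraction applies along a subsequence with $y=v$. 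Taking limits and using $y_n\to u$ and $fx_n\to u$, one checks $M(x_n,v)\to d(u,fv)$ and $d(fx_n,fv)\to d(u,fv)$; the inequality $d(fx_n,fv)<M(x_n,v)$ is again forced by (a) combined with $C_G$-property (i), so if $fv\neq u$ clause (b) yields $\limsup\zeta<C_G$, contradicting $\zeta\geq C_G$. For uniqueness of the point of coincidence, if $u'=fv'=gv'\neq u$ then $\tfrac12 d(gv,fv)=0<d(gv,gv')$ triggers the Suzuki premise, $M(v,v')$ reduces to $d(u,u')$, and $\zeta(d(u,u'),d(u,u'))\geq C_G$ conflicts with clause (a) combined with item (ii) of the $C_G$ property.
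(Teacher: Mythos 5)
Your proposal is correct and follows essentially the same route as the paper's own proof: the Jungck--Picard sequence $gx_{n+1}=fx_n$, the strict decrease of $d(y_n,y_{n+1})$ to $0$ via property~(i) of $C_G$ and clause~(b), the Cauchy step via Lemma~\ref{LemmaA}, the two-index Suzuki alternative to reach the limit point, and the same uniqueness argument. The only (welcome) difference is that at the final step you invoke clause~(b) of the $C_G$-simulation function rather than passing to the limit in the strict inequality $d(fx_n,fv)<M(x_n,v)$ as the paper does, which is in fact the more careful way to obtain the contradiction when $fv\neq gv$.
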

\begin{proof}
	Suppose that the point of coincidence exists, then the uniqueness follows as: let $u_1$ and $u_2$ be two distinct point of coincidence of $f$ and $g$ i.e. $u_1=fv_1=gv_1$  and $u_2=fv_2=gv_2$ for some  $v_1,v_2 \in X$. Now, since
	\begin{equation*}
	\frac{1}{2}d(gv_1,fv_1)=0<d(gv_1,gv_2),
	\end{equation*}
	so by using (\ref{SuzukiContraction}), we get 
	\begin{align*}
	C_G & \leq \zeta(d (fv_1, fv_2), M(v_1, v_2)) \\
	&=\zeta(d(u_1,u_2),d(u_1,u_2))<G(d(u_1,u_2),d(u_1,u_2))\le C_G,
	\end{align*}
	which is a contradiction. Thus, the point of coincidence is unique.
	
	\noindent Consider the sequences $\{x_n\}$ and $\{y_n\}$ defined as
	$$y_n=fx_n=gx_{n+1}~\text{for all}~n \in \mathbb{N}\cup\{0\},$$
	and assume that $y_n \ne y_{n+1}$ for all $n \in \mathbb{N}\cup\{0\}.$
	Let $x = x_{n+1}, y = x_{n+2}$, then 
	\begin{align*}
	\frac{1}{2}d(gx_{n+1},fx_{n+1})=\frac{1}{2}d(gx_{n+1},gx_{n+2})<d(gx_{n+1},gx_{n+2}).
	\end{align*}
	So from \eqref{SuzukiGeneralizedZG-contraction}, we have
	\begin{align*}
	C_G &\leq \zeta \Big(d(fx_{n+1}, fx_{n+2}), M(x_{n+1}, x_{n+2})\Big) \\ 
	&= \zeta \Big(d(y_{n+1}, y_{n+2}), \max\left\lbrace d(y_n, y_{n+1}), d(y_{n+1},y_{n+2})\right\rbrace\Big)\\ 
	&< G\Big(\max\left\lbrace d(y_n, y_{n+1}), d(y_{n+1},y_{n+2})\right\rbrace, d(y_{n+1}, y_{n+2})\Big).
	\end{align*}
	Further, using (i) of Definition \ref{CGproperty}, we have $$\max\left\lbrace d(y_n, y_{n+1}), d(y_{n+1},y_{n+2})\right\rbrace > d(y_{n+1}, y_{n+2}).$$ Hence for all for all $n \in \mathbb{N}\cup\{0\}$,  we have  $d (y_n, y_{n+1}) > d (y_{n+1}, y_{n+2})$. So $\{d (y_n, y_{n+1})\}$ is a monotonically decreasing sequence of nonnegative real numbers, and hence there exists $l\geq 0$ such that $\lim\limits_{n\to \infty}d (y_n, y_{n+1})=l$. Assume that $l>0$. Since $d (y_{n+1}, y_{n+2})< M(x_{n+1}, x_{n+2})$, both $d (y_{n+1}, y_{n+2})$ and $M(x_{n+1}, x_{n+2})$ tends to $l$ as $n\to\infty$ and
	\begin{align*}
	\frac{1}{2}d (y_n, y_{n+1})=\frac{1}{2}d(gx_{n+1},fx_{n+1})<d (gx_{n+1}, gx_{n+2})=d (y_n, y_{n+1}),
	\end{align*}
	then using \eqref{SuzukiGeneralizedZG-contraction} and (b) of Definition \ref{CGSimulationfunction}, we get 
	$$C_G \leq \lim\limits_{n\to\infty}\sup \zeta\Big(d (y_{n+1}, y_{n+2}), M(x_{n+1}, x_{n+2})\Big) < C_G,$$ which is a contradiction and hence $l=0$. 
	Now if we assume that $y_n = y_m$ for some $n > m$. Then we can choose $x_{n+1} = x_{m+1}$ and hence also $y_{n+1} = y_{m+1}$. Then using the similar arguments as above we get
	$$d(y_n, y_{n+1})<d (y_{n-1}, y_{n})<\cdots <d (y_m, y_{m+1})=d (y_n, y_{n+1}),$$
	which is a contradiction. Similarly if $y_n = y_m$ for some $n < m$, we get a contradiction. Hence $y_n \neq y_m$ for all  $n \ne m$. 
	
	Now we prove that $\{y_n\}$ is a Cauchy sequence. If not, then by Lemma \ref{LemmaA}  we have 
	$$\lim\limits_{k\to\infty} d(x_{m(k)},x_{n(k)})=\lim\limits_{k\to\infty} d(x_{m(k)+1},x_{n(k)+1})=\varepsilon,$$ 
	and consequently, 
	$$\lim\limits_{k\to\infty}M(x_{m(k)+1}, x_{n(k)+1})=\varepsilon.$$
	We claim that there exists $k_0\in \mathbb{N}$ such that 
	$$\frac{1}{2}d(y_{m(k)},y_{m(k)+1})< d(y_{m(k)},y_{n(k)}),$$ 
	$$\text{i.e.}~~~\frac{1}{2}d(gx_{m(k)+1}, fx_{m(k)+1})<d(gx_{m(k)+1}, gx_{n(k)+1})~~~~~~$$
	for all $k\ge k_0$. If not, then letting $k\to\infty$, we get $\varepsilon \leq 0$, which is not true. Also, using \eqref{SuzukiGeneralizedZG-contraction} and (i) of Definition \ref{CGproperty}, we have $d (y_{m(k)+1}, y_{n(k)+1})<M(x_{m(k)+1}, x_{n(k)+1})$, then  by (b) of Definition \ref{CGSimulationfunction}, we get 
	$$C_G \leq \lim\limits_{k\to\infty}\sup\zeta \Big(d (y_{m(k)+1}, y_{n(k)+1}), M(x_{m(k)+1}, x_{n(k)+1})\Big) < C_G$$ a contradiction. Hence $\{y_n\}$ is a Cauchy sequence.
	
	Suppose  $g(X)$ is complete subspace of $X$. Then $\{y_n\}$ being contained in $g(X)$ has a limit in $g(X)$, say $z\in X$ such that $y_n \to gz$, i.e. $gx_n \to gz$ as $n\to \infty$. We will show that $z$ is  the coincidence point of $f$ and $g$. We  can suppose $y_n \ne fz,  gz$ for  $n\in\mathbb{N}\cup\{0\}$, otherwise we are done. 
	We claim that 
	\begin{equation}\label{EQC1}
	\frac{1}{2}d(gx_n,gx_{n+1})<d(gx_n,gz)~~\text{or}~~\frac{1}{2}d(gx_{n+1},gx_{n+2})<d(gx_{n+1},gz)
	\end{equation}
	for every $n\in\mathbb{N}\cup\{0\}$. If not, then there exists $m\in \mathbb{N}$ for which 
	$$\frac{1}{2}d(gx_m,gx_{m+1})\ge d(gx_m,gz)~~\text{and}~~\frac{1}{2}d(gx_{m+1},gx_{m+2})\ge d(gx_{m+1},gz)$$ holds. Then
	$$2d(gx_m,gz)\le d(gx_m,gx_{m+1}) \le d(gx_m,gz)+d(gz,gx_{m+1})$$ which implies $d(gx_m,gz)\le d(gz,gx_{m+1})\leq \frac{1}{2}d(gx_{m+1},gx_{m+2}).$ Further since $$\frac{1}{2}d(gx_m,fx_{m})=\frac{1}{2}d(gx_m,gx_{m+1}) <d(gx_m,gx_{m+1}),$$ so from \eqref{SuzukiContraction}, we have 
	$$C_G \le \zeta(d(fx_m,fx_{m+1}),M(x_m,x_{m+1}))< G(M(x_m,x_{m+1}),d(fx_m,fx_{m+1})),$$
	and in view of (i) of Definition \ref{CGproperty}, we get $M(x_m,x_{m+1})=d(gx_m,gx_{m+1})>d(fx_m,fx_{m+1})$. Now 
	\begin{align*}
	d(fx_m,fx_{m+1})&<d(gx_m,gx_{m+1}) \\
	&\leq d(gx_m,gz)+ d(gz,gx_{m+1})\\
	&\leq \frac{1}{2}d(gx_{m+1},gx_{m+2})+\frac{1}{2}d(gx_{m+1},gx_{m+2})\\
	&= d(gx_{m+1},gx_{m+2}) = d(fx_m,fx_{m+1}),
	\end{align*}
	which is a contradiction. Hence \eqref{EQC1} holds. Now from \eqref{EQC1} and \eqref{SuzukiContraction} we have 
	%
	$$C_G \leq \zeta\Big(d(fx_n, fz), M(x_n, z)\Big) < G\Big(M(x_n, z), d(fx_n, fz)\Big)$$ and then by (i) of Definition \ref{CGproperty}, we get $d(fx_n, fz)<M(x_n, z).$ Now, if we assume $fz\neq gz$, then taking limit as $n\to \infty$, we get $d(gz,fz)<d(gz,fz)$, a contradiction. Hence $gz=fz.$ In same manner if $\frac{1}{2}d(gx_{n+1},gx_{n+2})<d(gx_{n+1},gv)$ we can show that $gz=fz$. In case $f(X)$ is a complete subspace of $X$, the sequence $\{y_n\}$ converges in $g(X)$ since $f(X) \subseteq g(X)$. So the previous argument works. 
\end{proof}
\begin{theorem}\rm \label{TheoremC2}
	Let $(X, d)$ be a complete metric space, $f, g : X \to X$ be self-mappings and $f$ be a Suzuki type generalized $(Z_G, g)$-contraction. 
	Assume that the following conditions hold:
	\begin{enumerate}
		\item[(i)] $f(X)\subseteq g(X)$,
		\item[(ii)] $g$ is a continuous,
		\item[(iii)] $f$ and $g$ are commuting.
	\end{enumerate}
	Then $f$ and $g$ have unique point of coincidence.
\end{theorem}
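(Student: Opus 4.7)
The uniqueness half and the construction of the Picard-type sequence $y_n = fx_n = gx_{n+1}$ together with the proof that $\{y_n\}$ is Cauchy go through verbatim from Theorem \ref{TheoremC1}, so I would invoke that argument without repeating it. The new ingredient is exploiting completeness of $X$ rather than completeness of $g(X)$. Since $X$ is complete, $y_n \to p$ for some $p \in X$. Because $gx_n = y_{n-1} \to p$, continuity of $g$ gives $g(gx_n) \to gp$, i.e.\ $g y_{n-1} \to gp$. Commutativity then yields $f(y_{n-1}) = f(gx_n) = g(fx_n) = g(y_n) \to gp$.

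Writing $v_n = y_n$, the previous paragraph gives $g v_n \to gp$ and $f v_n = g v_{n+1} \to gp$. The goal is to apply the Suzuki-type generalized $(Z_G,g)$-contraction to pairs $(v_n, p)$ and conclude $fp = gp$. There are two obstacles: (a) we must verify the Suzuki hypothesis $\tfrac12 d(g v_n, f v_n) < d(g v_n, gp)$ for sufficiently many $n$, and (b) we need $g v_n \ne gp$ in order to apply \eqref{SuzukiGeneralizedZG-contraction} at $(v_n, p)$.

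For obstacle (a) I would mimic the two-index argument used in the proof of Theorem \ref{TheoremC1}: show that for every $n$, at least one of
\[
\tfrac12 d(g v_n, f v_n) < d(g v_n, gp), \qquad \tfrac12 d(g v_{n+1}, f v_{n+1}) < d(g v_{n+1}, gp)
\]
must hold. Negating both and using $f v_n = g v_{n+1}$ together with the triangle inequality gives $d(g v_n, g v_{n+1}) \le d(g v_{n+1}, g v_{n+2})$. Then the Suzuki hypothesis is automatic at the pair $(v_n, v_{n+1})$, and expanding $M(v_n, v_{n+1})$ (whose only nontrivial term becomes $\tfrac12 d(g v_n, g v_{n+2})$) and using property (i) of Definition \ref{CGproperty} would force $d(g v_{n+1}, g v_{n+2}) < \tfrac12 d(g v_n, g v_{n+2}) \le d(g v_{n+1}, g v_{n+2})$, a contradiction. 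This step is the main technical obstacle, since it requires reconstructing the algebraic chain of the Theorem \ref{TheoremC1} proof in the setting where the limit is no longer known to lie in $g(X)$.

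For obstacle (b) I would dispose of the case $g v_n = gp$ eventually separately: then $f v_{n-1} = g v_n = gp$ gives the coincidence point immediately. Otherwise, along a subsequence on which both the Suzuki condition holds and $g v_n \ne gp$, the contraction gives $C_G \le \zeta(d(f v_n, fp), M(v_n, p))$. A routine limit computation of the five terms composing $M(v_n, p)$ shows $M(v_n, p) \to d(gp, fp)$ and $d(f v_n, fp) \to d(gp, fp)$, while property (a) of Definition \ref{CGSimulationfunction} forces $d(f v_n, fp) < M(v_n, p)$. Assuming $fp \ne gp$, these two positive sequences converge to the same positive limit from below, so property (b) of Definition \ref{CGSimulationfunction} yields $\limsup \zeta(\cdot, \cdot) < C_G$, contradicting the contraction inequality. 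Hence $fp = gp$, and combined with the uniqueness step this completes the proof.
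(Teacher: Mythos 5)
Your proposal is correct and follows essentially the same route as the paper's proof: uniqueness and the Cauchy property are inherited from Theorem \ref{TheoremC1}, completeness of $X$ together with continuity of $g$ and commutativity lets you work with the sequence $g(gx_n)$ and $f(gx_n)=g(fx_n)$, the two-index Suzuki alternative is established by exactly the same triangle-inequality contradiction, and the conclusion $fp=gp$ follows from the same $\limsup$ argument via property (b) of Definition \ref{CGSimulationfunction}. The only differences are cosmetic (your indexing $v_n=y_n$ versus the paper's $gx_n$, and your explicit handling of the degenerate case $gv_n=gp$, which the paper leaves implicit).
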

\begin{proof} If the coincidence point exist then it is unique (as proved in Theorem \ref{TheoremC1}).
	Construct the sequences $\{x_n\}$ and $\{y_n\}$ as
	$$y_n=fx_n=gx_{n+1}~\text{for all}~n \in \mathbb{N}\cup\{0\}.$$ Then the sequence $\{y_n\}$ is Cauchy (as shown in proof of Theorem \ref{TheoremC1}).
	
	Since the space $(X,d)$ is complete, there exists $z\in X$ such that $y_n \to z$ or $gx_n\to z$ as $n \to \infty$.  As $g$ is continuous, $g^2x_n \to gz$ as $n \to \infty$. We claim that 
	\begin{equation}\label{EQC2}
	\frac{1}{2}d(ggx_n,ggx_{n+1})<d(ggx_n,ggz)~~\text{or}~~\frac{1}{2}d(ggx_{n+1},ggx_{n+2})<d(ggx_{n+1},ggz)
	\end{equation}
	for every $n\in\mathbb{N}\cup\{0\}$. If not, then there exists $m\in \mathbb{N}$ for which $$\frac{1}{2}d(ggx_m,ggx_{m+1})\ge d(ggx_m,ggz)~~\text{and}~~\frac{1}{2}d(ggx_{m+1},ggx_{m+2})\ge d(ggx_{m+1},ggz)$$ holds. 
	Then
	$$2d(ggx_m,ggz)\le d(ggx_m,ggx_{m+1}) \le d(ggx_m,ggz)+d(ggz,ggx_{m+1})$$ which implies $$d(ggx_m,ggz)\le d(ggz,ggx_{m+1})\leq \frac{1}{2}d(ggx_{m+1},ggx_{m+2}).$$ Further since $f$ and $g$ are commutative, we have
	$$\frac{1}{2}d(ggx_m,fgx_{m})=\frac{1}{2}d(ggx_m,gfx_{m}) <d(ggx_m,ggx_{m+1}).$$
	So from \eqref{SuzukiGeneralizedZG-contraction}, we have 
	$$C_G \le \zeta\Big(d(fgx_m,fgx_{m+1}),M(gx_m,gx_{m+1})\Big)< G\Big(M(gx_m,gx_{m+1}),d(fgx_m,fgx_{m+1})\Big)$$
	and in view of (i) of Definition \ref{CGproperty}, we get $M(gx_m,gx_{m+1})=d(ggx_m,ggx_{m+1})>d(fgx_m,fgx_{m+1})$. Now
	\begin{align*}
	d(fgx_m,fgx_{m+1})& <d(ggx_m,ggx_{m+1}) \\
	&\leq d(ggx_m,ggz)+ d(ggz,ggx_{m+1})\\
	&\leq  \frac{1}{2}d(ggx_{m+1},ggx_{m+2})+\frac{1}{2}d(ggx_{m+1},ggx_{m+2})\\
	&= d(ggx_{m+1},ggx_{m+2})=d(fgx_m,fgx_{m+1}),
	\end{align*}
	which is a contradiction. Hence \eqref{EQC2} holds. Now from \eqref{EQC2} and \eqref{SuzukiGeneralizedZG-contraction}, we have
	$$C_G \le  \zeta\Big(d(fgx_n, fz), M(gx_n, z)\Big) < G\Big(M(gx_n, z), d (fgx_n, fz)\Big)$$ and then by (i) of Definition \ref{CGproperty} and $f,g$ are commuting, we get $d(fgx_n, fz)=d(gfx_n, fz)<M(gx_n, z)$. Now, if we assume $fz\neq gz$, then taking limit as $n\to \infty$ and by continuity of $g$, we get $d(gz,fz)< d(gz,fz)$, a contradiction. Hence $gz=fz.$ In same manner  if $\frac{1}{2}d(ggx_{n+1},ggx_{n+2})<d(ggx_{n+1},ggv)$ we can show that $gz=fz.$ 
\end{proof}

\begin{theorem}\rm \label{TheoremA1}
	Let $(X, d)$ be a metric space, $f, g : X \to X$ be self-mappings and $f$ be a Suzuki type $(Z_G, g)$-contraction.  
	Assume that the following conditions hold:
	\begin{enumerate}
		\item[(i)] $f(X)\subseteq g(X)$,
		\item[(ii)] $g(X)$ or $f(X)$ is complete.
	\end{enumerate}
	Then $f$ and $g$ have unique point of coincidence.
\end{theorem}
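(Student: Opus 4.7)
My plan is to mirror the proof of Theorem \ref{TheoremC1}, adapting every step to the simpler form of the contraction where $M(x,y)$ is replaced by $d(gx,gy)$. Since the Suzuki type $(Z_G,g)$-contraction is formally a special case in spirit (it only uses $d(gx,gy)$ on the right-hand side of $\zeta$), the argument should in fact be cleaner: nothing forces me to track a four-term maximum.

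First, I would dispatch uniqueness. Suppose $u_1=fv_1=gv_1$ and $u_2=fv_2=gv_2$ are two distinct points of coincidence. Then $\tfrac12 d(gv_1,fv_1)=0<d(gv_1,gv_2)$, so \eqref{SuzukiContraction} gives
\[
C_G\le\zeta\bigl(d(u_1,u_2),d(u_1,u_2)\bigr)<G\bigl(d(u_1,u_2),d(u_1,u_2)\bigr)\le C_G,
\]
a contradiction. Next I construct the Jungck-type sequence $y_n=fx_n=gx_{n+1}$ (possible by (i)) and assume $y_n\ne y_{n+1}$ throughout (otherwise a coincidence point is immediate). Taking $x=x_{n+1}$, $y=x_{n+2}$ gives $\tfrac12 d(gx_{n+1},fx_{n+1})<d(gx_{n+1},gx_{n+2})$, so \eqref{SuzukiContraction} yields
\[
C_G\le\zeta\bigl(d(y_{n+1},y_{n+2}),d(y_n,y_{n+1})\bigr)<G\bigl(d(y_n,y_{n+1}),d(y_{n+1},y_{n+2})\bigr),
\]
and property (i) of Definition \ref{CGproperty} forces $d(y_n,y_{n+1})>d(y_{n+1},y_{n+2})$. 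Hence $\{d(y_n,y_{n+1})\}$ decreases to some $l\ge 0$; assuming $l>0$ and using condition (b) of Definition \ref{CGSimulationfunction} with $t_n=d(y_{n+1},y_{n+2})$, $s_n=d(y_n,y_{n+1})$ produces $C_G\le\limsup_n \zeta(t_n,s_n)<C_G$, so $l=0$. The argument ruling out $y_n=y_m$ for $n\ne m$ is literally the one used in Theorem \ref{TheoremC1}.

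The Cauchy step is next: if $\{y_n\}$ is not Cauchy, Lemma \ref{LemmaA} yields subsequences with $\lim_k d(x_{m(k)},x_{n(k)})=\lim_k d(x_{m(k)+1},x_{n(k)+1})=\varepsilon>0$. I would then verify, by the same contradiction with $\varepsilon\le 0$, that for sufficiently large $k$,
\[
\tfrac12 d(gx_{m(k)+1},fx_{m(k)+1})<d(gx_{m(k)+1},gx_{n(k)+1}),
\]
so \eqref{SuzukiContraction} applies and, after invoking (b) of Definition \ref{CGSimulationfunction} with $t_n\leftrightarrow d(y_{m(k)+1},y_{n(k)+1})$ and $s_n\leftrightarrow d(y_{m(k)},y_{n(k)})$ (both tending to $\varepsilon$), I get $C_G\le\limsup_k \zeta(\cdot,\cdot)<C_G$, contradiction. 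Thus $\{y_n\}$ is Cauchy.

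Finally, completeness of $g(X)$ (or of $f(X)\subseteq g(X)$) supplies $z\in X$ with $gx_n\to gz$. To identify $z$ as a coincidence point I would reuse the Suzuki alternative: at each $n$, at least one of
\[
\tfrac12 d(gx_n,gx_{n+1})<d(gx_n,gz)\qquad\text{or}\qquad \tfrac12 d(gx_{n+1},gx_{n+2})<d(gx_{n+1},gz)
\]
must hold; otherwise adding and applying the triangle inequality (exactly as in Theorem \ref{TheoremC1}, but now with the $d(gx,gy)$-version of \eqref{SuzukiContraction}) leads to $d(fx_m,fx_{m+1})<d(fx_m,fx_{m+1})$. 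Along the chosen subsequence, \eqref{SuzukiContraction} together with (i) of Definition \ref{CGproperty} gives $d(fx_n,fz)<d(gx_n,gz)$; passing to the limit forces $d(gz,fz)<d(gz,fz)$ unless $fz=gz$. The main technical obstacle I anticipate is precisely the last step—ensuring that the simpler right-hand side $d(gx_n,gz)$ (rather than $M(x_n,z)$) is still enough to close the inequality at the limit—but since $d(gx_n,gz)\to 0$ while $d(fx_n,fz)\to d(gz,fz)$, the closure is in fact more transparent than in Theorem \ref{TheoremC1}.
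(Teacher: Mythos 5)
Your proposal is correct and follows essentially the same route as the paper: the paper's proof of this theorem simply cites Theorem \ref{TheoremC1} for uniqueness and the Cauchy property and then runs the Suzuki-alternative argument with $d(gx,gy)$ in place of $M(x,y)$, exactly as you do. If anything, your version is slightly more careful, since you explicitly rework the monotonicity and Cauchy steps for the simpler right-hand side rather than relying on the (not literally applicable) generalized-contraction computation of Theorem \ref{TheoremC1}.
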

\begin{proof} 
	If the coincidence point exist then it is unique (as proved in Theorem \ref{TheoremC1}).
	Construct the sequences $\{x_n\}$ and $\{y_n\}$ as
	$$y_n=fx_n=gx_{n+1}~\text{for all}~n \in \mathbb{N}\cup\{0\}.$$ Then the sequence $\{y_n\}$ is Cauchy (as shown in proof of Theorem \ref{TheoremC1}).
	
	Suppose $g(X)$ is complete subspace of $X$. Then $\{y_n\}$ being contained in $g(X)$ has a limit in $g(X)$, say $z\in X$ such that $y_n \to gz$, i.e. $gx_n \to gz$ as $n\to \infty$. We will show that $z$ is  the coincidence point of $f$ and $g$. We  can suppose $y_n \ne fz$ and $y_n \ne  gz$ for  $n\in\mathbb{N}\cup\{0\}$, otherwise we are done.  We also claim that 
	\begin{equation} \label{EQA1}
	\frac{1}{2}d(gx_n,gx_{n+1})<d(gx_n,gz)~~\text{or}~~\frac{1}{2}d(gx_{n+1},gx_{n+2})<d(gx_{n+1},gz)
	\end{equation}
	for every $n\in\mathbb{N}\cup\{0\}$. If not, then there exists $m\in \mathbb{N}$ for which 
	$$\frac{1}{2}d(gx_m,gx_{m+1})\ge d(gx_m,gz)~~\text{and}~~\frac{1}{2}d(gx_{m+1},gx_{m+2})\ge d(gx_{m+1},gz)$$ holds. Then
	$$2d(gx_m,gz)\le d(gx_m,gx_{m+1}) \le d(gx_m,gz)+d(gz,gx_{m+1})$$ which implies $d(gx_m,gz)\le d(gz,gx_{m+1})\leq \frac{1}{2}d(gx_{m+1},gx_{m+2}).$ Further since $$\frac{1}{2}d(gx_m,fx_{m})=\frac{1}{2}d(gx_m,gx_{m+1}) <d(gx_m,gx_{m+1}),$$ so from \eqref{SuzukiContraction}, we have 
	$$C_G \le \zeta(d(fx_m,fx_{m+1}),d(gx_m,gx_{m+1}))< G(d(gx_m,gx_{m+1}),d(fx_m,fx_{m+1})),$$
	and in view of (i) of Definition  \ref{CGproperty}, we get $d(gx_m,gx_{m+1})>d(fx_m,fx_{m+1}).$
	\begin{align*}
	\text{Now}~~~~~d(fx_m,fx_{m+1})&<d(gx_m,gx_{m+1}) \\
	&\leq d(gx_m,gz)+ d(gz,gx_{m+1})\\
	&\leq \frac{1}{2}d(gx_{m+1},gx_{m+2})+\frac{1}{2}d(gx_{m+1},gx_{m+2})\\
	&= d(gx_{m+1},gx_{m+2}) = d(fx_m,fx_{m+1})
	\end{align*}
	which is a contracdiction. Hence \eqref{EQA1} holds. Now from \eqref{EQA1} and \eqref{SuzukiContraction}, we have  
	$$C_G \leq  \zeta\Big(d(fx_n, fz), d(gx_n, gz)\Big) < G\Big(d(gx_n, gz), d(fx_n, fz)\Big)$$ and then by (i) of Definition \ref{CGproperty}, we get $ d (fx_n, fz)<d (gx_n, gz).$ Taking limit as $n\to \infty$ we get $fx_n \to fz.$ Hence $gz=fz.$ In same manner  if $\frac{1}{2}d(gx_{n+1},gx_{n+2})<d(gx_{n+1},gv)$, we can show that $gz=fz$. In case $f(X)$ is a complete subspace of $X$, the sequence $\{y_n\}$ converges in $g(X)$ since $f(X) \subseteq g(X)$. So the previous argument works. 
\end{proof}

\begin{theorem}\rm \label{TheoremA2}
	Let $(X, d)$ be a complete metric space, $f, g : X \to X$ be self-mappings and $f$ be a Suzuki type $(Z_G, g)$-contraction. Assume that the following conditions hold:
	\begin{enumerate}
		\item[(i)] $f(X)\subseteq g(X)$,
		\item[(ii)] $g$ is a continuous,
		\item[(iii)] $f$ and $g$ are commuting.
	\end{enumerate}
	Then $f$ and $g$ have unique point of coincidence.
\end{theorem}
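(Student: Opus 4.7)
The plan is to follow the scheme of Theorem \ref{TheoremC2} almost verbatim, replacing the expression $M(x,y)$ appearing there by the simpler $d(gx,gy)$ dictated by the Suzuki type $(Z_G,g)$-contraction hypothesis. First I would establish uniqueness of the point of coincidence exactly as in Theorem \ref{TheoremC1}: from two distinct points of coincidence $u_1=fv_1=gv_1$ and $u_2=fv_2=gv_2$ the trivial inequality $\tfrac{1}{2}d(gv_1,fv_1)=0<d(gv_1,gv_2)$ activates \eqref{SuzukiContraction} with both arguments equal to $d(u_1,u_2)$, and condition (a) of Definition \ref{CGSimulationfunction} combined with $G(s,s)\le C_G$ produces $C_G\le\zeta(d(u_1,u_2),d(u_1,u_2))<G(d(u_1,u_2),d(u_1,u_2))\le C_G$, a contradiction.

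For existence, I would use $f(X)\subseteq g(X)$ to construct the Jungck iterates $y_n=fx_n=gx_{n+1}$. The Cauchy property of $\{y_n\}$ is established exactly as in the initial portion of Theorem \ref{TheoremC1}, and already reused in Theorem \ref{TheoremA1}; the argument only relies on the strict inequality $d(y_{n+1},y_{n+2})<d(y_n,y_{n+1})$, so reducing $M(x,y)$ to $d(gx,gy)$ does not affect any step. Completeness of $(X,d)$ then yields a limit $z\in X$ with $gx_n\to z$. Continuity of $g$ promotes this to $ggx_n\to gz$, and the commutativity $fg=gf$ gives $fgx_n=gfx_n=gy_n=ggx_{n+1}\to gz$ as well.

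The central step is the Suzuki-type dichotomy asserting that for every $n$,
\[
\tfrac{1}{2}d(ggx_n,ggx_{n+1})<d(ggx_n,gz) \quad\text{or}\quad \tfrac{1}{2}d(ggx_{n+1},ggx_{n+2})<d(ggx_{n+1},gz),
\]
which I would prove by the triangle-inequality squeeze used in Theorem \ref{TheoremC2}: assuming both alternatives fail for some $m$ yields $d(ggx_m,gz)\le\tfrac{1}{2}d(ggx_{m+1},ggx_{m+2})$; applying \eqref{SuzukiContraction} to the pair $(gx_m,gx_{m+1})$ and using commutativity to identify $fgx_i=ggx_{i+1}$, property (i) of Definition \ref{CGproperty} forces $d(ggx_{m+1},ggx_{m+2})<d(ggx_m,ggx_{m+1})$, which together with the triangle inequality chain $d(ggx_m,ggx_{m+1})\le d(ggx_m,gz)+d(gz,ggx_{m+1})\le d(ggx_{m+1},ggx_{m+2})$ gives the desired contradiction.

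With the dichotomy in hand, I would apply \eqref{SuzukiContraction} to the pair $(gx_n,z)$ along whichever subsequence realises the first alternative (the other case being symmetric), obtaining $C_G\le\zeta(d(fgx_n,fz),d(ggx_n,gz))<G(d(ggx_n,gz),d(fgx_n,fz))$, so that property (i) of Definition \ref{CGproperty} yields $d(fgx_n,fz)<d(ggx_n,gz)$. Rewriting $fgx_n=gfx_n$, passing to the limit and invoking continuity of $g$ with $gx_n\to z$ forces $d(gz,fz)\le 0$, hence $fz=gz$. The main obstacle I anticipate is the bookkeeping in the dichotomy step: the shift from the pair $(x_m,x_{m+1})$ used in Theorem \ref{TheoremA1} to $(gx_m,gx_{m+1})$ here, forced by the fact that we only have $gx_n\to z$ rather than $x_n\to z$, means commutativity must be invoked repeatedly to convert $fgx_i$ into $ggx_{i+1}$, and the hypothesis $gx_m\ne gx_{m+1}$ required by \eqref{SuzukiContraction} has to be checked, which follows from the distinctness of the $y_n$'s established during the Cauchy argument.
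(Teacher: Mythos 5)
Your proposal is correct and follows essentially the same route as the paper, whose proof of Theorem \ref{TheoremA2} is simply the one-line remark that it proceeds as in Theorems \ref{TheoremC2} and \ref{TheoremA1}; you have fleshed out exactly that combination (the $ggx_n$/commutativity/continuity scheme of Theorem \ref{TheoremC2} with $M(x,y)$ replaced by $d(gx,gy)$). The only pedantic point is that the antecedent needed when applying \eqref{SuzukiContraction} to the pair $(gx_m,gx_{m+1})$ is $ggx_m\ne ggx_{m+1}$ rather than $gx_m\ne gx_{m+1}$, a detail the paper itself also glosses over.
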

\begin{proof}
	The proof follows in the same manner as Theorem \ref{TheoremC2} and \ref{TheoremA1}. 
\end{proof}
\begin{theorem}\rm \label{TheoremB}
	In addition to hypotheses of Theorem \ref{TheoremC1}-\ref{TheoremA2}, if $f$ and $g$ are weakly compatible,
	then they have a unique common fixed point in $X$.
\end{theorem}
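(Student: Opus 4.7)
The plan is to reduce the statement to Proposition \ref{PropA}, which already handles the passage from a unique point of coincidence to a unique common fixed point for weakly compatible mappings. All that Theorem \ref{TheoremB} adds on top of Theorems \ref{TheoremC1}--\ref{TheoremA2} is the weak compatibility hypothesis, and each of those theorems delivers precisely a unique point of coincidence of $f$ and $g$. So the argument will be essentially a one-line invocation of earlier material, and there is no real obstacle to overcome.

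More concretely, I would first recall that under the hypotheses of any one of Theorems \ref{TheoremC1}--\ref{TheoremA2}, there exists $v\in X$ with $u=fv=gv$, and moreover this point of coincidence $u$ is unique. Then, since $f$ and $g$ are weakly compatible, Proposition \ref{PropA} applies verbatim and yields that $u$ is the unique common fixed point of $f$ and $g$ in $X$.

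For the reader who prefers an in-line verification rather than a citation, I would add the standard two-step argument: weak compatibility gives $fu=fgv=gfv=gu$, so $fu$ is itself a point of coincidence of $f$ and $g$; uniqueness of the point of coincidence then forces $fu=u$, hence $u=fu=gu$ is a common fixed point. Uniqueness of the common fixed point is immediate because any common fixed point is a point of coincidence, and the point of coincidence is unique. This completes the proof with no computation beyond what is already available in the preliminaries.
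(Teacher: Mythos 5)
Your proposal is correct and follows exactly the paper's route: the paper proves Theorem \ref{TheoremB} by a one-line appeal to Proposition \ref{PropA}, since each of Theorems \ref{TheoremC1}--\ref{TheoremA2} already supplies the unique point of coincidence. Your optional in-line verification of Proposition \ref{PropA} (using $fu=fgv=gfv=gu$ and uniqueness of the point of coincidence) is accurate but redundant with the cited preliminary.
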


\begin{proof}
	The proof follows from Proposition \ref{PropA}. 
\end{proof}
By taking $g=i_X$ (identity mapping on $X$) and $C_G=0$ in Theorem \ref{TheoremB}, we get the following result of Kumam et al. \cite{KumamGopalBudhia} as a corollary.
\begin{corollary}\rm \label{corollaryA1A2B}
	Let $(X,d)$ be a complete metric space and $f:X \to X$ be a suzuki type $\mathcal{Z}$-contraction with respect to $\zeta$. Then $f$ has a unique fixed point $x^*$ in $X$. 
\end{corollary}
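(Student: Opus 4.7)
The plan is to obtain the corollary as a direct specialization of Theorem \ref{TheoremB}, using the parameter choice $g = i_X$ and $C_G = 0$, so the proof should amount to checking that every hypothesis of (the relevant part of) Theorem \ref{TheoremB} becomes trivial or is given.

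First I would recall the remark immediately following the definition of Suzuki type $(Z_G,g)$-contraction: setting $g = i_X$ and $C_G = 0$ recovers exactly the Suzuki type $\mathcal{Z}$-contraction of \cite{KumamGopalBudhia}. Thus the assumption that $f$ is a Suzuki type $\mathcal{Z}$-contraction with respect to $\zeta$ is equivalent to $f$ being a Suzuki type $(Z_G,g)$-contraction (with $g = i_X$ and $C_G = 0$), which puts us in the setting of Theorem \ref{TheoremA2}.

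Next I would verify the three hypotheses of Theorem \ref{TheoremA2} for the pair $(f, i_X)$ on the complete metric space $(X,d)$: condition (i) $f(X) \subseteq i_X(X) = X$ is automatic; condition (ii) holds because the identity is continuous; condition (iii) holds because the identity commutes with every self-map. Theorem \ref{TheoremA2} therefore applies and produces a unique point of coincidence $x^* \in X$ of $f$ and $i_X$, i.e.\ a point with $f x^* = i_X(x^*) = x^*$.

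Finally, to upgrade this point of coincidence to a unique common fixed point I would invoke Theorem \ref{TheoremB}: the pair $(f, i_X)$ is trivially weakly compatible (they commute everywhere, hence at every coincidence point), so Theorem \ref{TheoremB} yields a unique common fixed point of $f$ and $i_X$ in $X$, which is exactly a unique fixed point of $f$. There is essentially no obstacle here; the only thing worth pausing over is making the identification $g = i_X$, $C_G = 0$ explicit so that the Suzuki contractive inequality \eqref{SuzukiContraction} reduces to the standard Suzuki type $\mathcal{Z}$-contraction inequality of \cite{KumamGopalBudhia}, after which the corollary follows with no further work.
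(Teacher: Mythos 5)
Your proposal is correct and matches the paper's own derivation, which obtains the corollary precisely by setting $g=i_X$ and $C_G=0$ in Theorem \ref{TheoremB}; you have merely spelled out the (trivial) verification of the hypotheses of Theorem \ref{TheoremA2} and the weak compatibility of $(f,i_X)$ that the paper leaves implicit.
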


\begin{remark}\rm\label{RemarkA}
	We observed that the Property (K) assumed in the main result (Theorem 2.1) of Kumam et al. \cite{KumamGopalBudhia} is not required if the mapping $f$ is asymptotically regular at every $x\in X$.
\end{remark}

\begin{example}\rm
	Let $X=[0,1]$ and $d$ be the usual metric on $X$. Define $f,g : X \to X$ as follows:
	$$fx = \left\{\begin{array}{lll}
	0 ~& \mbox{if} ~x \in [0, \frac{1}{4}], \\
	&\\
	\frac{1}{20} ~& \mbox{if} ~x \in \left(\frac{1}{4}, \frac{1}{2}\right), \\
	&\\
	\frac{4}{25} ~& \mbox{if} ~ x\in \left[\frac{1}{2}, 1\right]
	\end{array}\right. ~\text{and}~ \hspace{.5cm}  gx = \left\{\begin{array}{lll}
	\frac{x}{3} ~& \mbox{if} ~x \in [0, \frac{1}{4}], \\
	&\\
	\frac{1}{4} ~& \mbox{if} ~ x \in \left(\frac{1}{4}, \frac{1}{2}\right), \\
	&\\
	\frac{1}{3} ~& \mbox{if} ~x\in \left[\frac{1}{2}, 1\right]. \\
	\end{array}\right.$$  
	Consider the functions $\zeta(t,s)=\frac{4}{5}s-t$, $G(s,t)=s-t$ and $C_G=0$, then for $x\in\left(\frac{1}{4},\frac{1}{2}\right)$ and $y\in\left[\frac{1}{2},1\right]$, the mapping $f$ and $g$ do not satisfy the condition \eqref{RadnovicContrA} and hence Theorem 2.1 of Radenovic and Chandok \cite{RadenovicChandok} is not applicable.
	However, $f$ is a Suzuki type $(Z_G, g)$-contraction as well as Suzuki type generalized $(Z_G, g)$-contraction and satisfy all the assumption of Theorem \ref{TheoremC1} and \ref{TheoremA1}.  
	Here $f$ and $g$ have a unique coincidence point $x=0$. Also, the mappings are weakly compatible.
\end{example}

\begin{example}\rm
	Let $X=\{(0,0), (3,3), (4,0), (0,4), (4,5), (5,4)\}$ be a metric space endowed with the metric $d$ defined by
	$$d((x_1,x_2), (y_1,y_2))= \vert x_1-y_1\vert +\vert x_2-y_2\vert.$$
	Let $f,g:X\to X$ defined by
	$$f(x_1,x_2) = \left\{\begin{array}{lll}
	(0,0) & \mbox{if} ~x_1=x_2, \\[3pt]
	(x_1,0) & \mbox{if} ~x_1< x_2, \\[3pt]
	(0,x_2) & \mbox{if} ~x_1>x_2,
	\end{array}\right.$$
	$$
	g(x_1,x_2) = \left\{\begin{array}{lll}
	(4,0) & \mbox{if} ~x_1=x_2=3, \\[3pt]
	(x_1,x_2) & \mbox{otherwise}.
	\end{array}\right.
	$$
	Consider the functions $\zeta(t,s)=\frac{4}{5}s-t$, $G(s,t)=s-t$ and $C_G=0$, then we note that
	$$0\leq \zeta\Big(d(fx,fy), M(x,y)\Big)$$
	holds if 
	$(x,y) \notin\left\lbrace\Big((4,5),(5,4)\Big), \Big((5,4),(4,5)\Big)\right\rbrace$.
	Since in this case
	$$\frac{1}{2}d(gx,fx)\geq d(gx,gy),$$
	so $f$ is a Suzuki type generalized $(Z_G, g)$-contraction.  Also, $f$ and $g$ satisfy all hypothesis of Theorem \ref{TheoremB} with the unique common fixed point $(x_1,x_2)=(0,0)$. Notice that
	$$\zeta\Big(d(fx,fy),M(x,y)\Big)<0$$
	whenever $(x,y) \in \left\lbrace\Big((4,5),(5,4)\Big), \Big((5,4),(4,5)\Big)\right\rbrace$. So the mapping $f$ does not satisfy the condition \eqref{RadnovicContrA} and \eqref{GeneralizedZG-contraction} and hence Theorem 2.1 and 2.3 of Radenovic and Chandok \cite{RadenovicChandok} are not applicable.
\end{example}
\section{Coincidence points of rational type quasi-contractions}
\label{4}
If $f$ and $g$ be two self-mappings on a metric space $(X,d)$ satisfying $f(X)\subseteq g(X)$ and $x_0 \in X$, let us define $x_1 \in X$ such that $fx_0 =gx_1$. Having defined $x_n\in X$, let $x_{n+1} \in X$ be such that $fx_n=gx_{n+1}$. We say that $\{fx_n\}$ is a $f$-$g$-sequence of initial point $x_0$. Letting $fx_n=gx_{n+1}=y_n$, where $n\in \mathbb{N}\cup\{0\}$, denote
$\mathcal{O}(y_k;n)=\{y_k, y_{k+1}, y_{k+2}, \cdots,y_{k+n}\}$. Let $\delta[\mathcal{O}(y_k;n)]$ denotes the diameter of $\mathcal{O}(y_k;n)$. If $\delta[\mathcal{O}(y_k;n)]>0$ for $k,n \in \mathbb{N}$ then we have $\delta[\mathcal{O}(y_k;n)]= d(y_i,y_j)$, where $k\leq i<j\leq k+n$.

\begin{definition}\rm \label{modifiedCGproperty}
	A mapping $G : [0,\infty)^2 \to \mathbb{R}$ has a property $C_G^m$, if there exists an
	$C_G \ge 0$ such that
	\begin{enumerate}
		\item[(i)]  $G(s, t) > C_G$ implies $\dfrac{s}{s+1} > t$,
		\item[(ii)] $G(t, t) \le C_G$ for all $t \in [0,\infty)$.
	\end{enumerate}
\end{definition}

\noindent Denote $m(x,y)=\max\left\lbrace d(gx,gy), d(gx,fx), d(gy,fy), d(gx,fy), d(gy,fx) \right\rbrace.$ 
Using the above property we prove following result:

\begin{theorem}\rm \label{TheoremThree3}
	Let $(X, d)$ be a metric space and $f, g : X \to X$ be self-mappings  satisfying the condition
	\begin{equation} \label{SuzukiUsingZetaExample}
	C_G \leq \frac{m(x,y)}{m(x,y)+1}-d(fx,fy) < G(m(x,y), d(fx,fy))
	\end{equation} 
	where $G : [0,\infty)^2 \to \mathbb{R}$  is a $C$-class function. 
	Assume that the following conditions hold:
	\begin{enumerate}
		\item[(i)] $f(X)\subseteq g(X)$,
		\item[(ii)] $g(X)$ or $f(X)$ is complete.
		\item[(iii)] if $\{t_n\}$ and $\{s_n\}$ are sequences in $(0,\infty)$ such that $\lim\limits_{n\to\infty} t_n = \lim\limits_{n\to\infty} s_n > 0$ and
		$t_n < \dfrac{s_n}{s_n +1}$, then $\lim\limits_{n\to\infty}\sup\left(\dfrac{s_n}{s_n +1}-t_n\right) <C_G$.
	\end{enumerate}
	Then $f$ and $g$ have unique point of coincidence.
\end{theorem}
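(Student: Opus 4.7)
The plan is to parallel Theorem \ref{TheoremC1}, with the rational $\frac{m(x,y)}{m(x,y)+1}$ playing the role of $M(x,y)$. First, since \eqref{SuzukiUsingZetaExample} forces $G(m(x,y),d(fx,fy))>C_G$, property $C_G^m$ of Definition \ref{modifiedCGproperty} yields the usable bound $d(fx,fy)<\frac{m(x,y)}{m(x,y)+1}$ whenever $m(x,y)>0$. Uniqueness then follows quickly: for two distinct points of coincidence $u_i=fv_i=gv_i$, one computes $m(v_1,v_2)=d(u_1,u_2)$, and the bound becomes $d(u_1,u_2)<\frac{d(u_1,u_2)}{d(u_1,u_2)+1}$, contradicting $\frac{t}{t+1}<t$ for $t>0$.

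Next I construct the Picard sequence $y_n=fx_n=gx_{n+1}$ and assume $y_n\neq y_{n+1}$ throughout. Writing $d_n=d(y_n,y_{n+1})$ and $m_n=m(x_{n+1},x_{n+2})$, the fact $d(gx_{n+2},fx_{n+1})=0$ gives $m_n=\max\{d_n,\,d_{n+1},\,d(y_n,y_{n+2})\}$, and the contraction reads $d_{n+1}<\frac{m_n}{m_n+1}$. The subcase $m_n=d_{n+1}$ is impossible; the subcase $m_n=d_n$ yields $d_{n+1}<\frac{d_n}{d_n+1}<d_n$ directly. To handle $m_n=d(y_n,y_{n+2})$ I would first show that $\{d_n\}$ is bounded (if $d_{n+1}\geq d_n$, the triangle bound $d(y_n,y_{n+2})\leq 2d_{n+1}$ combined with the contraction forces $d_{n+1}<\frac{1}{2}$); then, setting $L=\limsup d_n$, extract subsequences along which $d_{n_k+1}\to L$ and $m_{n_k}$ tends to some $L'\geq L$, and apply hypothesis (iii) with $t_n=d_{n_k+1}$, $s_n=m_{n_k}$ to deduce $L=0$ from the contradiction $C_G\leq\limsup(\frac{s_n}{s_n+1}-t_n)<C_G$.

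For the Cauchy property I invoke Lemma \ref{LemmaA}: assume failure and extract subsequences $\{x_{m(k)}\},\{x_{n(k)}\}$ with common limit $\varepsilon>0$. Using $d_n\to 0$, the off-diagonal terms in the max for $m(x_{m(k)+1},x_{n(k)+1})$ vanish, so this quantity and $d(y_{m(k)+1},y_{n(k)+1})$ both tend to $\varepsilon$; hypothesis (iii) then yields the contradiction $C_G<C_G$. Completeness of $g(X)$ (or $f(X)$, via $f(X)\subseteq g(X)$) supplies $z\in X$ with $gx_n\to gz$. Unlike Theorems \ref{TheoremC1}--\ref{TheoremA2}, no Suzuki-type dichotomy is needed: applying \eqref{SuzukiUsingZetaExample} with $x=x_n$, $y=z$, using property $C_G^m$, and taking $n\to\infty$ (with $m(x_n,z)\to d(gz,fz)$) yields $d(gz,fz)\leq\frac{d(gz,fz)}{d(gz,fz)+1}$, hence $fz=gz$.

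The principal obstacle is the convergence $d_n\to 0$: in Theorems \ref{TheoremC1} and \ref{TheoremA1} the strict bound $d(y_{n+1},y_{n+2})<M(x_{n+1},x_{n+2})$ combined with the max structure yields outright monotonicity of $\{d_n\}$, whereas here the triangle estimate $d(y_n,y_{n+2})\leq d_n+d_{n+1}$ and the nonlinearity of $t\mapsto\frac{t}{t+1}$ only force $d_{n+1}<\frac{1}{2}$ in the oscillatory subcase, without enforcing decrease. Routing the argument through boundedness and a $\limsup$ application of (iii), rather than through monotonicity, is the technical step that requires the most care.
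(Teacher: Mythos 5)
Your overall architecture (uniqueness, the Cauchy step via Lemma \ref{LemmaA}, and the passage to the limit at the coincidence point) agrees with the paper's, and you are right both that no Suzuki-type dichotomy is needed and that the delicate point is $d(y_n,y_{n+1})\to 0$. However, your proposed resolution of that delicate point has a genuine gap. Hypothesis (iii) applies only to sequences with $\lim t_n=\lim s_n>0$, whereas you take $t_k=d(y_{n_k+1},y_{n_k+2})\to L$ and $s_k=m_{n_k}\to L'$ knowing only $L'\ge L$. From $m_n\le d_n+d_{n+1}$ and $\limsup_k d_{n_k}\le L$ you get $L\le L'\le 2L$, which does not pin down $L'=L$; and if $L'>L$ then (iii) is silent and no contradiction arises, since $\limsup_k\bigl(\tfrac{s_k}{s_k+1}-t_k\bigr)=\tfrac{L'}{L'+1}-L$ can perfectly well be $\ge C_G$ (e.g.\ $C_G=0$, $L=0.3$, $L'=0.6$ gives $0.075$). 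Note also that if you \emph{could} force $L'=L$ you would not need (iii) here at all: $d_{n_k+1}<\tfrac{m_{n_k}}{m_{n_k}+1}$ would give $L\le\tfrac{L}{L+1}$ and hence $L=0$ directly. So the real missing ingredient is control of the unhalved cross terms $d(gx,fy)$, $d(gy,fx)$ in $m(x,y)$, which is exactly what destroys monotonicity and what your limsup extraction does not recover.

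The paper closes this step by a Ciri\'c-type orbit-diameter argument: it proves $\delta[\mathcal{O}(y_n;1)]<\lambda_1\cdots\lambda_n\,\delta[\mathcal{O}(y_0;n+1)]$ with each $\lambda_i=\frac{1}{\delta[\,\cdot\,]+1}<1$ and separately bounds $\delta[\mathcal{O}(y_0;n+1)]\le\frac{1}{1-\beta}\,d(y_0,y_1)$, so that boundedness of the whole orbit, not termwise monotonicity, drives $d(y_n,y_{n+1})\to 0$. If you want to repair your proof along robust lines, the clean version of this idea is: first show the orbit is bounded (the diameter of $\mathcal{O}(y_0;n)$ must be attained at a pair containing $y_0$, since otherwise \eqref{SuzukiUsingZetaExample} and Definition \ref{modifiedCGproperty} give $\delta<\frac{\delta}{\delta+1}$; hence $\delta\le d(y_0,y_1)+\frac{\delta}{\delta+1}\le d(y_0,y_1)+1$); then set $\delta_n=\sup_{i,j\ge n}d(y_i,y_j)$ and observe $\delta_{n+1}\le\frac{\delta_n}{\delta_n+1}$, so $\delta_n$ decreases to some $\delta$ with $\delta\le\frac{\delta}{\delta+1}$, forcing $\delta=0$. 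This yields $d(y_n,y_{n+1})\to 0$ and the Cauchy property in one stroke. Your remaining steps (uniqueness, the Lemma \ref{LemmaA} contradiction, and $fz=gz$) are correct and coincide with the paper's.
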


\begin{proof}
	Suppose that the point of coincidence exists, then the uniqueness follows as: let $u_1$ and $u_2$ be two distinct points of coincidence of $f$ and $g$, i.e. $u_1=fv_1=gv_1$  and $u_2=fv_2=gv_2$ for some  $v_1,v_2 \in X$. 
	Then using \eqref{SuzukiUsingZetaExample} and (ii) of Definition \ref{modifiedCGproperty}, we get 
	\begin{align*}
	C_G & \leq \dfrac{m(v_1,v_2)}{m(v_1,v_2)+1}-d(fv_1,fv_2)<G\Big(m(v_1,v_2), d(fv_1,fv_2)\Big)\\[5pt]
	&\Longrightarrow C_G<G(d(u_1,u_2),d(u_1,u_2))\le C_G,
	\end{align*}
	which is a contradiction. Thus, the point of coincidence is unique.
	
	Now, consider the sequences $\{x_n\}$ and $\{y_n\}$ defined as
	$$y_n=fx_n=gx_{n+1}~\text{for all}~n \in \mathbb{N}\cup\{0\},$$
	and assume that $y_n \ne y_{n+1}$ for all $n \in \mathbb{N}\cup\{0\}.$
	Taking $x = x_{n}, y = x_{n+1}$ in \eqref{SuzukiUsingZetaExample}, we have
	\begin{align*}
	C_G &\leq \dfrac{m(x_{n},x_{n+1})}{m(x_{n},x_{n+1})+1} -d(fx_{n}, fx_{n+1}) < G\Big(m(x_{n},x_{n+1}), d(fx_{n}, fx_{n+1})\Big). 
	\end{align*}
	Using (i) of Definition \ref{modifiedCGproperty}, we get
	\begin{equation*}
	d(fx_{n}, fx_{n+1}) < \dfrac{m(x_{n},x_{n+1})}{m(x_{n},x_{n+1})+1}
	\end{equation*}
	which gives 
	\begin{equation*}
	d(y_n, y_{n+1})< \dfrac{\max\left\lbrace d(y_{n-1}, y_n), d(y_{n},y_{n+1}), d(y_{n-1}, y_{n+1})\right\rbrace}{\max\left\lbrace d(y_{n-1}, y_n), d(y_{n},y_{n+1}), d(y_{n-1}, y_{n+1})\right\rbrace+1} 
	\end{equation*}
	\begin{equation*}
	\text{i.e.} ~~\delta[\mathcal{O}(y_n;1)]< \frac{\delta[\mathcal{O}(y_{n-1};2)]}{\delta[\mathcal{O}(y_{n-1};2)]+1}~~~
	\end{equation*}
	
	\begin{equation}\label{AA}
	\text{or} ~~\delta[\mathcal{O}(y_n;1)]< \lambda_1\delta[\mathcal{O}(y_{n-1};2)],~~~
	\end{equation}
	\text{where}~$\lambda_1=\dfrac{1}{\delta[\mathcal{O}(y_{n-1};2)]+1} <1$.
	Again using \eqref{SuzukiUsingZetaExample} and (i) of Definition \ref{modifiedCGproperty}, we get
	\begin{align*}
	\delta[&\mathcal{O}(y_{n-1};2)]=d(y_{r},y_p),~~\text{where}~n-1\leq r< p \leq n+1\\
	& <  \dfrac{\max\left\lbrace d(y_{r-1}, y_{p-1}), d(y_{r-1},y_{r}), d(y_{p-1}, y_{p}), d(y_{r-1}, y_{p}), d(y_{p-1}, y_{r})\right\rbrace}{\max\left\lbrace d(y_{r-1}, y_{p-1}), d(y_{r-1},y_{r}), d(y_{p-1}, y_{p}), d(y_{r-1}, y_{p}), d(y_{p-1}, y_{r})\right\rbrace+1} 
	\end{align*} 
	\begin{equation} \label{AA1}
	\text{i.e.}~~~\delta[\mathcal{O}(y_{n-1};2)] < \dfrac{\delta[\mathcal{O}(y_{n-2};3)]}{\delta[\mathcal{O}(y_{n-2};3)]+1} = \lambda_2\delta[\mathcal{O}(y_{n-2};3)].
	\end{equation}
	From \eqref{AA} and \eqref{AA1}, we have
	\begin{equation}\label{AA2}
	\delta[\mathcal{O}(y_n;1)]< \lambda_1\lambda_2\delta[\mathcal{O}(y_{n-2};3)].
	\end{equation}
	
	\noindent Continuing in same manner, we get
	\begin{equation}\label{BB}
	\delta[\mathcal{O}(y_n;1)]< \lambda_1\lambda_2\ldots \lambda_n\delta[\mathcal{O}(y_{0};n+1)],
	\end{equation} 
	where $\delta[\mathcal{O}(y_{0};n+1)] = d(y_r,y_q)$ for some positive $0\leq r<q\leq n+1$ and $\lambda_i=\dfrac{1}{\delta[\mathcal{O}(y_{n-i};i+1)]+1} <1$. Let $\lambda=\max\{\lambda_1, \lambda_2,\ldots, \lambda_n\}$, then we have
	\begin{equation}\label{BB1}
	\delta[\mathcal{O}(y_n;1)]< \lambda^n\delta[\mathcal{O}(y_{0};n+1)].
	\end{equation} 
	\noindent Now
	\begin{align*}
	\delta[\mathcal{O}(y_{0};n+1)] = d(y_r,y_q) &\leq d(y_r,y_{r+1}) + d(y_{r+1},y_q)\\
	& = d(y_r,y_{r+1}) + d(fx_{r+1},fx_q)\\
	& \leq d(y_r,y_{r+1}) + \delta[\mathcal{O}(y_1,n)]\\
	& \leq d(y_r,y_{r+1}) + \frac{\delta[\mathcal{O}(y_{0};n+1)]}{\delta[\mathcal{O}(y_{0};n+1)]+1}\\
	& = d(y_r,y_{r+1}) + \beta\delta[\mathcal{O}(y_{0};n+1)],
	\end{align*}
	where $\beta = \dfrac{1}{\delta[\mathcal{O}(y_{0};n+1)]+1}< 1$.
	Hence
	\begin{equation}\label{CC}
	\delta[\mathcal{O}(y_{0};n+1)] \leq \frac{1}{1-\beta}d(y_r,y_{r+1})
	\end{equation}
	Thus, from \eqref{BB1} and \eqref{CC}, we have
	
	\begin{equation} \label{DD}
	\delta[\mathcal{O}(y_n;1)]< \frac{\lambda^n}{1-\beta} d(y_0,y_1)
	\end{equation} 
	with $r=0$. Therefore, because of $\lambda <1$, we get
	\begin{equation}\label{DD1}
	d(y_n, y_{n+1}) \rightarrow 0~~ as ~~n \to \infty.
	\end{equation} 
	Now, we claim that the sequence $\{y_n\}$ is Cauchy. If not, then by Lemma \ref{LemmaA} we have
	$$\lim\limits_{k\to\infty} d(x_{m(k)},x_{n(k)})=\lim\limits_{k\to\infty} d(x_{m(k)+1},x_{n(k)+1})=\varepsilon,$$ 
	and consequently, 
	$$\lim\limits_{k\to\infty}m(x_{m(k)+1}, x_{n(k)+1})=\varepsilon.$$
	Also, from \eqref{SuzukiUsingZetaExample} and (i) of Definition \ref{modifiedCGproperty}, we have  $$d(y_{m(k)+1}, y_{n(k)+1})<\dfrac{m(x_{m(k)+1}, x_{n(k)+1})}{m(x_{m(k)+1}, x_{n(k)+1})+1}.$$ 
	So by assumption (iii), we get 
	\begin{equation}
	C_G \leq \lim\limits_{k\to\infty}\sup \Big(\dfrac{m(x_{m(k)+1}, x_{n(k)+1})}{m(x_{m(k)+1}, x_{n(k)+1})+1}-d(y_{m(k)+1}, y_{n(k)+1})\Big) < C_G,
	\end{equation}
	a contradiction. Hence $\{y_n\}$ is a Cauchy sequence.
	
	Now, suppose  $g(X)$ is complete subspace of $X$. Then there exists a point, say $z\in X$ such that $y_n \to gz$, i.e. $gx_n \to gz$ as $n\to \infty$. We will show that $z$ is  the coincidence point of $f$ and $g$. We  can suppose $y_n \ne fz,  gz$ for  $n\in\mathbb{N}\cup\{0\}$, otherwise we are done. 
	
	Now, using \eqref{SuzukiUsingZetaExample} for $x=x_n$ and $y=z$, we have   
	$$C_G \leq  \dfrac{m(x_n, z)}{m(x_n, z)+1}-d(fx_n, fz) < G\Big(m(x_n, z), d(fx_n, fz)\Big),$$ and then by (i) of Definition \ref{modifiedCGproperty}, we get $ d(fx_n, fz)<\dfrac{m(x_n, z)}{m(x_n, z)+1}$. If we assume $fz\neq gz$, then taking limit as $n\to \infty$, we get $d(gz,fz)<d(gz,fz)$, a contradiction. Hence $gz=fz$. In case $f(X)$ is a complete subspace of $X$, the sequence $\{y_n\}$ converges in $g(X)$ since $f(X) \subseteq g(X)$. In same manner we can show that $gz=fz$. 
\end{proof}

\begin{example}\rm
	Let $X= (-2,2)$ be a metric space endowed with metric $d(x,y)= |x-y|$. Let $f$ and $g$ be self-mappings on $X$ defined by
	$$f(x) = \left\{\begin{array}{lll}
	0 & \mbox{if} ~-2<x<-1 ~\text{and}~1<x<2, \\[3pt]
	\dfrac{1+x}{3} & \mbox{if} ~-1\leq x < 0, \\[5pt]
	\dfrac{1-x}{3} & \mbox{if} ~ 0\leq x \leq 1,
	\end{array}\right.$$
	$$g(x) = \left\{\begin{array}{lll}
	1 & \mbox{if} ~-2<x\leq-1, \\[3pt]
	x & \mbox{if} ~-1< x < 1, \\[5pt]
	-1 & \mbox{if} ~ 1\leq x <2.
	\end{array}\right.~~~~~~~~~~~~~~~~~~~~~~$$
	Here Jungck's theorem \cite{Jungck1996} is not applicable since the mapping $g$ is discontinuous. However, for $G(s,t)=s-t$ and $C_G=0$, $f$ and $g$ satisfy all hypothesis of Theorem \ref{TheoremThree3} with the unique coincidence point $x=\frac{1}{4}$. 
\end{example}
\begin{remark}\rm
Let $m(x,y)=s$ and $d(fx,fy)=t$ for $x,y \in X$ with the metric $d$. Define a function $\zeta : [0,\infty)^2 \to \mathbb{R}$ by $\zeta(t,s)=\dfrac{s}{s+1}-t$, then clearly $\zeta\in Z_G$. We also observed that the Theorem \ref{TheoremThree3} is an approach to provide an partial answer to the question posed by Radenovic and Chandok in \cite{RadenovicChandok}. 
\end{remark}

Let $m(x,y)=s$ and $d(fx,fy)=t$ for $x,y \in X$ with the metric $d$. Define a function $\zeta : [0,\infty)^2 \to \mathbb{R}$ by $\zeta(t,s)=\dfrac{s}{s+1}-t$, then clearly $\zeta\in Z_G$. We also observed that the Theorem \ref{TheoremThree3} is an approach to provide an partial answer to the question posed by Radenovic and Chandok in \cite{RadenovicChandok}. \medskip

We conclude this section with the following open question:\\
\textbf{Question:} \\Whether the Theorem \ref{TheoremThree3} holds when the condition \eqref{SuzukiUsingZetaExample} is replaced with
\begin{equation*} 
\frac{1}{2}d(gx,fx)<d(gx,gy)\Rightarrow C_G \leq \frac{m(x,y)}{m(x,y)+1}-d(fx,fy) < G(m(x,y), d(fx,fy))?
\end{equation*}

\end{document}